\newcommand{\qee} {\hspace*{2mm}\hfill \ding{109}}
\renewcommand{\iff}{\leftrightarrow}
\renewcommand{\phi}{\varphi}
\newtheorem{theorem}{Theorem}[section]
\newtheorem{define}[theorem]{Definition}
\newtheorem{exa}[theorem]{Example}
\newenvironment{example}{\begin{exa} \rm}{\qee\end{exa}}
\newtheorem{exerc}[theorem]{Exercise}
\newtheorem{conj}[theorem]{Conjecture}
\newtheorem{ques}[theorem]{Open Question}
\newenvironment{question}{\begin{ques} \rm}{\qee\end{ques}}
\newtheorem{lem}[theorem]{Lemma}
\newenvironment{lemma}{\begin{lem} \it}{\end{lem}}
\newtheorem{cor}[theorem]{Corollary}
\newtheorem{rem}[theorem]{Remark}
\newenvironment{remark}{\begin{rem} \rm}{\qee\end{rem}}
 \newcommand{\tupel}[1]{{\langle #1 \rangle}}
 \newcommand{\verz}[1]{\{ #1 \}}
 \newcommand{\nrhd}{\mathrel{\not\! \rhd}}
\newcommand{\sline}{\raise-0.3ex\hbox{$\hbox{--}\kern-0.84ex\raise0.45ex\hbox{$\hbox{\scalebox{0.3}{\bf /}}
\kern-0.37ex\hbox{\scalebox{0.3}{\bf /}}$}$}}
\newcommand{\slinei}{\raise-0.3ex\hbox{$\hbox{--}\kern-0.84ex\raise0.45ex\hbox{$\hbox{\scalebox{0.3}{\bf \textbackslash}}
\kern-0.37ex\hbox{\scalebox{0.3}{\bf \textbackslash}}$}$}}
\newcommand{\lhdnneq}{\mathrel{\lhd_{\hspace*{-0.27cm}{}_{\kern0.2ex \slinei}}\hspace*{0.09cm}}}
\newcommand{\rhdnneq}{\mathrel{\rhd_{\hspace*{-0.27cm}{}_{\kern0.3ex \sline}}\hspace*{0.09cm}}}
\newcommand{\setvara}{{\mathcal A}}
\newcommand{\setvarb}{{\mathcal B}}
\newcommand{\setvarc}{{\mathcal C}}
\newcommand{\setvard}{{\mathcal D}}
\title[No minimal essentially undecidable Theories]{There are no\\ minimal essentially undecidable Theories}
\author{Fedor Pakhomov}\thanks{Research of Fedor Pakhomov was supported by FWO grant G0F8421N}
 \address{Vakgroep Wiskunde: Analysis, Logic and Discrete Mathematics, 
Ghent University,
Krijgslaan 281,
B9000~~Ghent,
Belgium\newline
and Steklov Mathematical Institute of Russian Academy of Sciences, 
Gubkina 8,
119991 Moscow, Russia}
\email{fedor.pakhomov@ugent.be}
\author{Juvenal Murwanashyaka}
 \address{Department of Mathematics, the Faculty of Mathematics and Natural Sciences,
                University of Oslo,
               Moltke Moes vei 35,
Niels Henrik Abels hus,
0851 Oslo, Norway}
\email{juvenalm@math.uio.no}
\date{\today}
\author{Albert Visser}
 \address{Philosophy, Faculty of Humanities,
                Utrecht University,
               Janskerkhof 13,
                3512BL~~Utrecht, The Netherlands}
\email{a.visser@uu.nl}
\date{\today}
\begin{document}

\keywords{interpretability, essential undecidability}

\subjclass[2010]{03F25,
03F30,
03F40,
}

\thanks{We thank Yong Cheng and Tim Button for comments on and their corrections to the preprint version of this paper.}

\begin{abstract}
We show that there is no theory that is minimal with respect to interpretability among recursively enumerable  essentially undecidable theories.
\end{abstract}

\maketitle

\section{Introduction}
For any salient property of  recursively enumerable theories $\mathcal P$, one can ask the obvious question
\emph{is there a weakest recursively enumerable 
theory satisfying $\mathcal P$?} But what does
\emph{weakest} mean here? A traditional answer is to take a theory $T$ to be given by
a recursively enumerable set of axioms $\mathcal A$. \emph{Weakest} is then interpreted as: $T$ has $\mathcal P$ and
no theory axiomatised by a proper subset of $\mathcal A$ has $\mathcal P$.

A paradigmatic example of an answer to our question under this reading, for the case where we take
$\mathcal P$ to be \emph{essential undecidability},
 is the well-known result by Tarski, Mostowski and Robinson that the theory {\sf Q}
with its standard axiomatisation is minimally essentially undecidable in the sense that all
theories given by a proper subset of the axioms have a decidable extension. See \cite[Chapter 2, Theorem 11]{tars:unde53}.
Similar results for theories of concatenation were obtained by Juvenal Murwanashyaka.
See \cite{murw:weak22}.
For the theory {\sf R}, a result in the same spirit is due to Cobham. 
See \cite{jone:vari83}. Only here minimality is applied to natural \emph{groups} of axioms rather than to single axioms. 

The above results crucially depend on the chosen axiom set. After all, each non-trivial finitely 
axiomatisable theory is axiomatisable by a single
axiom. Suppose pure predicate logic does not satisfy $\mathcal P$. If a theory has property $\mathcal P$, 
then it is automatically minimal with respect to the single-axiom
axiomatisation.  Along a different line, for example, in the case of {\sf Q}, it is easy to produce finitely axiomatisable strict 
sub-theories that are still essentially
undecidable, if one allows tampering with the axioms. 
For example, we may relativise the quantifiers in the axioms for plus and times to the class of $x$ such that
${\sf S}x \neq x$. Jones and Shepherdson, in \cite{jone:vari83}, provide an example of an essentially undecidable sub-theory of
{\sf R} that is strictly below {\sf R} in the sense that it proves strictly less theorems. However, they do strengthen one
axiom group in order to be able to drop another. 

In this paper, we zoom in on the property of \emph{essential undecidability of recursively enumerable theories} 
and we consider an ordering of recursively enumerable theories that only depends on the 
theory-qua-set-of-theorems, to wit \emph{interpretability}.
So, we ask whether there is an interpretability-minimal recursively enumerable essentially undecidable theory.
In this paper we show that there is no such theory. Thus, our main result is:

\begin{theorem}\label{grotesmurf}
There is no interpretability-minimal recursively enumerable essentially undecidable theory.
\end{theorem}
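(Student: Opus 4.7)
The plan is to show that every r.e.\ essentially undecidable theory $T$ admits an r.e.\ essentially undecidable $U$ strictly below it under $\rhd$. A useful preliminary observation is that the class of r.e.\ essentially undecidable theories is upward closed under $\rhd$: if $U\rhd T$ via an interpretation $\iota$ and $T$ is essentially undecidable, then any decidable consistent extension $U'$ of $U$ yields the decidable consistent theory $T':=\{\phi\in L_T:U'\vdash\iota(\phi)\}$ extending $T$, a contradiction. So the problem reduces to exhibiting an essentially undecidable $U$ lying strictly below $T$.

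I would attempt a diagonal construction. Take a fresh language $L_U$ containing a copy of Robinson's $R$, so that any consistent extension of the $R$-core is automatically essentially undecidable. Fix a translation $\tau\colon L_U\to L_T$ and impose, as a construction invariant, that an axiom $\chi\in L_U$ is admitted into $U$ only when $T\vdash\tau(\chi)$; this invariant secures $T\rhd U$ directly via $\tau$. Enumerate all candidate translations $\sigma_0,\sigma_1,\ldots\colon L_T\to L_U$. At stage $i$, the aim is to locate an axiom $\phi_i$ of $T$ and a sentence $\chi_i\in L_U$ with $T\vdash\tau(\chi_i)$ and $\chi_i\vdash\neg\sigma_i(\phi_i)$; adding $\chi_i$ defeats $\sigma_i$ as an interpretation of $T$ in $U$ without spoiling the invariant.

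The main obstacle is precisely this search step. When the composite $\tau\sigma_i\colon L_T\to L_T$ happens to be a self-interpretation of $T$, we have $T\vdash\tau\sigma_i(\phi)$ for every axiom $\phi$ of $T$, so no $\chi$ with $\chi\vdash\neg\sigma_i(\phi)$ can satisfy $T\vdash\tau(\chi)$. The remedy has to come from the essential undecidability of $T$: the recursively inseparable pair of $T$-provable and $T$-refutable sentences must provide, for each $\sigma_i$, enough flexibility either to arrange that $\tau\sigma_i$ is not a self-interpretation (via a recursion-theorem fixed point in the definition of $\tau$) or to handle the problematic $\sigma_i$ through a secondary move that splits cases according to how $\sigma_i$ acts on the embedded $R$-core. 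Once every stage is handled, essential undecidability of $U$ follows from the $R$-core, $T\rhd U$ follows from the $\tau$-invariant on admitted axioms, and $U\not\rhd T$ follows from the diagonalisation. The coordination of the invariant with the diagonal step is where the technical heart of the argument lies.
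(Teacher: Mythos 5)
Your opening observation (upward closure of essential undecidability under $\rhd$) is correct and matches the paper's Theorem~\ref{ess_undec_inter_2}(i), and the reduction to finding a $U$ strictly below $T$ is legitimate. But the construction you sketch has several genuine gaps, the first of which is fatal.

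The $R$-core is the problem. To keep the invariant $T\vdash\tau(\chi)$ for \emph{every} $U$-axiom $\chi$ --- and the invariant must cover the axioms of the embedded copy of $R$, not just the diagonal sentences $\chi_i$ --- you need $\tau$ restricted to that copy to be an interpretation of $R$ in $T$, i.e.\ $T\rhd R$. But not every essentially undecidable r.e.\ theory interprets $R$; Janiczak-style theories over $\mathsf{J}$ axiomatised by $\{\mathsf{A}_n : n\in\mathcal Y\}\cup\{\neg\mathsf{A}_n:n\in\mathcal Z\}$ for a recursively inseparable pair $(\mathcal Y,\mathcal Z)$ are essentially undecidable, yet by the paper's Theorem~\ref{supersmurf} one can arrange that they do not interpret $R$. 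So your construction cannot even begin for such $T$. Even when $T\rhd R$, having $R$ sit inside $U$ means $U\rhd R$ outright; if $T$ and $R$ are mutually interpretable (e.g.\ $T=R$ itself), this already gives $U\rhd T$ and destroys strict descent. Finally, the self-interpretation obstacle you flag is real, but the remedies are not supplied: a recursion-theorem fixed point cannot prevent $\tau\sigma_i$ from ever being a self-interpretation (the identity always is one once you take $\sigma_i = \tau^{-1}$-like translations), and the ``secondary move'' is left entirely unspecified. The paper avoids all of this by working over the \emph{decidable} base theory $\mathsf{J}$, whose quantifier elimination via the independent sentences $\mathsf{A}_n$ lets one effectively defeat every candidate interpretation against a fixed recursive index set $\mathcal X$ (Theorem~\ref{supersmurf}), and by asking only for $W\not\rhd T$ rather than $T\rhd W\not\rhd T$ --- the infimum $T\ovee W$ then automatically furnishes something essentially undecidable strictly below $T$. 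Dropping the demand $T\rhd U$ is exactly what makes the problem tractable, and it is the piece your proposal is missing.
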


It is easy to see that, if a recursively enumerable theory is \emph{minimally} essentially undecidable with 
respect to interpretability, then it is, \emph{ipso facto}, the \emph{minimum} recursively enumerable theory, 
modulo mutual interpretability.
This is because essentially undecidable recursively enumerable theories are closed
under finite infima. Thus, it is sufficient to prove the following lemma.
\begin{lemma}\label{hulpsmurf}
 There is no  recursively enumerable essentially undecidable
 theory that is the interpretability-minimum.
 \end{lemma}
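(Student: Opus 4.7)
The plan is to argue by contradiction. Suppose $U$ is an r.e.\ essentially undecidable theory that is interpretable in every r.e.\ essentially undecidable theory. I aim to produce an r.e.\ essentially undecidable theory $T$ into which $U$ admits no interpretation, contradicting the supposition.

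The skeleton is as follows. Fix a signature $\mathcal L$ containing two designated unary predicates $\alpha,\beta$ and adopt a \emph{baseline} for $T$: the disjointness axiom $\forall x\,\neg(\alpha(x)\wedge\beta(x))$ together with, for each element $n$ enumerated into a designated effectively inseparable pair $(A,B)$ of r.e.\ sets, the atom $\alpha(\bar n)$ when $n\in A$ and $\beta(\bar n)$ when $n\in B$. Any consistent extension of this baseline is automatically essentially undecidable, since $\{n:T\vdash\alpha(\bar n)\}$ separates $A$ from $B$ and inseparability rules out a recursive separator inside any decidable completion. It therefore suffices to produce a consistent r.e.\ extension $T$ of the baseline such that no $K_n$ from a fixed enumeration $K_0,K_1,\dots$ of interpretations of $U$ into $\mathcal L$ qualifies as an interpretation of $U$ into $T$. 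Build $T=\bigcup_s\mathcal A_s$ in stages, devoting stage $s$ to killing $K_s$: search for a sentence $\varphi$ provable in $U$ such that $\neg K_s(\varphi)$ is consistent with $\mathcal A_s$, and if found commit $\neg K_s(\varphi)$ as a new axiom.

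The main obstacle, and where the technical heart of the argument will lie, is to coordinate the addition of killing axioms with the enumeration of the baseline so that neither derails the other. A negative axiom $\neg K_s(\varphi)$ committed now might conflict with a future positive atom $\alpha(\bar m)$ or $\beta(\bar m)$ if $K_s(\varphi)$ happens to involve these predicates at $\bar m$, and conversely a sufficiently fast baseline enumeration might force $\mathcal A_s$ to prove $K_s(\varphi)$ for all $U$-theorems $\varphi$ and thereby prevent any kill from succeeding. The natural remedy is to invoke the second recursion theorem to define the enumeration of $(A,B)$ \emph{in terms of} the construction of $T$, and at each stage to pick $\varphi$ of sufficiently large G\"odel number so that $K_s(\varphi)$ lies outside the syntactic footprint of the atoms committed so far; this is possible because $U$, being essentially undecidable, has infinitely many mutually independent theorems. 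Once this coordination is pushed through, the resulting $T$ is r.e., consistent, essentially undecidable, and fails to interpret $U$, contradicting the assumed interpretability-minimality of $U$.
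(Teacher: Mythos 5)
Your overall plan — diagonalise against a recursive enumeration of candidate interpretations, and build essential undecidability into the target theory via a recursively inseparable pair — is the right shape and matches the spirit of the paper's first proof. However, several load-bearing steps are missing or confused, and the missing ingredient is precisely what makes the paper's proof go through: Janiczak's decidable theory $\mathsf{J}$.

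The central gap is the existence and effectiveness of the ``kill'' at stage $s$. You want a $U$-theorem $\varphi$ with $\neg K_s(\varphi)$ consistent with $\mathcal A_s$. Such a $\varphi$ exists iff $\mathcal A_s$ does not already interpret $U$ via $K_s$, and your finite-stage theory (disjointness plus atoms plus previous kills) is not argued to be decidable, so you cannot invoke essential undecidability of $U$ to conclude that $\mathcal A_s$ fails to interpret it. Worse, even supposing a killer exists, the check ``$\neg K_s(\varphi)$ is consistent with $\mathcal A_s$'' must be \emph{effective} for the output theory to be r.e.; without decidability of $\mathcal A_s$ this is a $\Pi^0_1$ question, and you would be forced into a priority/injury argument that your sketch does not set up. Your proposed remedy — choose $\varphi$ ``of sufficiently large G\"odel number'' — does not address this: the sentence $K_s(\varphi)$ is determined by the syntactic structure of $\varphi$ under the fixed translation $K_s$, not by $\varphi$'s G\"odel number, so inflating the G\"odel number gives no control over the ``footprint'' of $K_s(\varphi)$. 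Relatedly, the claim that $U$ ``has infinitely many mutually independent theorems'' is incoherent: all theorems of $U$ are $U$-equivalent, so if $K_s$ were an interpretation their $K_s$-translations would all be provably equivalent in the target. Finally, the recursion-theorem coordination between the inseparable pair and the killing axioms is gestured at but not carried out, and it does not by itself resolve either of the two problems above.

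The paper kills all of these issues at once by working over Janiczak's theory $\mathsf{J}$: it is decidable, its finite extensions by conjunctions of $\pm\mathsf{A}_i$ are decidable (hence cannot interpret the essentially undecidable $U$, so a killer provably exists and can be found effectively), and by Theorem~\ref{janicz} every sentence is $\mathsf{J}$-equivalent to a boolean combination of the $\mathsf{A}_n$, which gives a precise and computable notion of footprint. This lets the paper precompute a recursive set $\mathcal X$ with large gaps and then simply take $W = \mathsf{J} + \{\mathsf{A}_n : n\in\mathcal Y\} + \{\neg\mathsf{A}_n : n\in\mathcal Z\}$ for recursively inseparable $\mathcal Y,\mathcal Z\subseteq\mathcal X$; no stage-by-stage construction, no recursion theorem, and the one non-constructive step is harmless. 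The paper also gives a second, independent proof via a pure complexity count: interpretability between r.e.\ theories is $\Sigma^0_3$, while Shoenfield's construction $a\mapsto\mathsf{sch}(a)$ reduces $\mathsf{Rec}$ to the set of theories interpreting a putative minimum $U^\star$, so such a $U^\star$ would make $\mathsf{Rec}$ $\Pi^0_3$, contradicting Theorem~\ref{romo}. You do not consider this route; it is worth studying, since it sidesteps the diagonalisation entirely.
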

 
 The paper is structured as follows.
 In Section~\ref{baco} we introduce the basic concepts needed for the paper as a whole. 
 This section also contains the reduction of Theorem~\ref{grotesmurf} to Lemma~\ref{hulpsmurf}.
 In Section~\ref{finax} we give a simple proof
 that there is no minimal theory with respect to interpretability among \emph{finitely axiomatised theories}. In Section~\ref{mare} we prove Theorem~\ref{grotesmurf}. 
 We provide two different proofs. The first proof employs a direct diagonalisation argument, the second reduces our question to
 a general recursion theoretic result.
 
\section{Some Basic Concepts}\label{baco}
Theories in the present paper are one-sorted recursively enumerable theories of predicate logic 
in finite signature. Our results are about theories-qua-sets-of-theorems. However, our methods 
sometimes demand the intensional perspective where the axiom set is given by a formula or by a recursive index.

There is a whole range of notions of interpretability.
We can or cannot have pieces, multidimensionality, parameters, relativisation,
non-preservation of identity. 
Our main result is not sensitive to the specific details of the notion
of interpretation as long as we consider mixtures of these specific features.

We want to prove that there is no interpretability-minimal recursively enumerable essentially 
undecidable theory. The proof can be divided in a preliminary step and a main step.
\begin{enumerate}[I.]
    \item 
   We show that, modulo mutual interpretability, 
   if there is an interpretability-minimal recursively enumerable essentially 
  undecidable theory, then there is an interpretability-minimum
 recursively enumerable essentially undecidable theory.
 To do this we show (i) that interpretability is a lower semi-lattice (modulo
 mutual interpretability), i.e., binary infima exist
 and (ii) that  infima preserve essential undecidability.
 This gives the reduction of Theorem~\ref{grotesmurf} to Lemma~\ref{hulpsmurf}.
 \item
 We prove Lemma~\ref{hulpsmurf}.
\end{enumerate}

Below we will define one uniform construction on theories defining the
binary minimum operation. This construction provides a binary minimum for all notions of
interpretability that are given by combinations of the above features.
So, (I) works for all notions of interpretability we under consideration.

It is immediate that, if we have (II) for the most inclusive notion,
where we have pieces, multidimensionality, parameters, relativisation, 
non-preservation of identity,
then we also have it for all weaker notions. So, we only need to prove
(II) for our strongest notion of interpretability. Alternatively,
one could look at what is used in the proof of (II) and see that
all notions satisfy these assumptions.

In this paper, we will use the most inclusive notion, to wit
piecewise, multidimensional (with dimensions varying over pieces),
relative, non-identity-preserving interpretability with parameters. 

We refer the reader for definitions to \cite{viss:onq17}.
Here we will just fix notations and give some basic facts.

We write $U \rhd V$ for $U$ interprets $V$ and $V \lhd U$ for
$V$ is interpretable in $U$.

Given two theories 
$U$ and $V$ we form $W := U \ovee V$ in the following way.
The signature of $W$ is the disjoint union of the signatures
of $U$ and $V$ with an additional fresh zero-ary predicate $P$.
The theory $W$ is axiomatised by the axioms $P \to \phi$ if $\phi$ is
a $U$-axiom and $\neg\, P \to \psi$ if $\psi$ is a $V$-axiom. One
can show that $U\ovee V$ is the infimum of $U$ and $V$ in the 
interpretability ordering $\lhd$. This result works for all
choices of our notion of interpretation.\footnote{It is a bit
strange that we use a disjunction-like notation for an
infimum. This strangeness is due to the fact that the
conventional choice for the interpretability-ordering
puts the weakest theory below where in boolean algebras
the strongest proposition is the lower one. In our notation,
we follow the boolean intuition and view our operation as
a kind of disjunction of theories.}
We note that $\ovee$ preserves finite axiomatisability.

A theory is \emph{essentially undecidable} iff all its consistent extensions in the
same language are undecidable. Salient examples of  essentially undecidable theories are {\sf R} and {\sf Q}. See \cite{tars:unde53}.
We have the following basic insights.

\begin{theorem}\label{ess_undec_inter_1}
$U$ is essentially undecidable iff every consistent $V$ such that $V \rhd U$,
 is undecidable.
\end{theorem}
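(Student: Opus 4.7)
The plan is to handle the two implications separately. The backward direction is immediate: any consistent extension $V$ of $U$ in the signature of $U$ trivially interprets $U$ via the identity interpretation, so the hypothesis forces every such $V$ to be undecidable, which is precisely essential undecidability of $U$.

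The forward direction carries the content. Assume $U$ is essentially undecidable, and let $V$ be a consistent theory with $V\rhd U$ via some interpretation $K$. I would argue by contradiction: suppose $V$ is decidable, and manufacture a consistent decidable extension of $U$ in its own signature to violate essential undecidability. The natural candidate is the pullback
\[
U^\ast := \{\phi \text{ in the language of } U : V \vdash K(\phi)\}.
\]
Three verifications are needed: (a) $U \subseteq U^\ast$, which is exactly the defining property of $K$ being an interpretation of $U$ in $V$; (b) $U^\ast$ is decidable, since $\phi \mapsto K(\phi)$ is a recursive translation and $V$-provability is decidable by assumption; (c) $U^\ast$ is deductively closed and consistent. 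For (c) the essential input is that interpretations preserve first-order consequence, so if $\phi_1,\dots,\phi_n \in U^\ast$ were first-order inconsistent, $V$ would prove the corresponding Boolean combination of the $K(\phi_i)$'s to imply $\bot$, giving $V\vdash \bot$, against consistency.

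The only place where subtlety enters is that the paper adopts the most inclusive notion of interpretation (piecewise, multidimensional, relative, parametric, non-identity-preserving), so the definition of $U^\ast$ must be adjusted: for parametric interpretations one reads $V\vdash K(\phi)$ as $V$ proving $\forall \vec{p}\,(\alpha(\vec{p}) \to K_{\vec{p}}(\phi))$, where $\alpha$ is the parameter-domain condition; for piecewise interpretations one stitches together the pieces in the obvious way. These adjustments do not affect the structure of the argument, which rests only on the recursiveness of $K$ and the preservation of first-order consequence under $K$. I do not foresee any genuine obstacle; the proof is essentially a one-line pullback argument, and the only work is notational hygiene around the general notion of interpretation.
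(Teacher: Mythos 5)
Your proposal is correct and takes essentially the same approach as the paper: the backward direction via the identity interpretation, and the forward direction via the pullback theory $\{\phi : V \vdash \phi^{\iota}\}$, which is the exact construction the paper uses.
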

\begin{proof} Since any $V$ extending $U$ also interprets $U$, the ``if'' part is trivial. For ``only if'' part we 
note that if $V\rhd U$ and $V$ is consistent and decidable, then any interpretation $\iota\colon V\rhd U$ gives 
a decidable consistent extension $T$ of $U$ that is axiomatized by all the sentences $\varphi$ of the language of $U$ 
such that $V$ proves the $\iota$-translation of $\varphi$.
\end{proof}

\begin{theorem}\label{ess_undec_inter_2}
We have:
\begin{enumerate}[i.]
    \item If $U$ is essentially undecidable and $V\rhd U$, then
    $V$ is essentially undecidable.
    \item If $U$ and $V$ are essentially undecidable, then 
    so is $U\ovee V$.
\end{enumerate}
\end{theorem}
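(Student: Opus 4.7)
The plan is to derive (i) directly from Theorem~\ref{ess_undec_inter_1}, and to prove (ii) by a case split on the new sentential parameter $P$ introduced by the $\ovee$-construction.

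For (i), assume $U$ is essentially undecidable and $V \rhd U$. Let $V'$ be any consistent extension of $V$ in the language of $V$. Since $V' \supseteq V$, certainly $V' \rhd V$ via the identity interpretation, and composing with a fixed interpretation witnessing $V \rhd U$ yields $V' \rhd U$. By Theorem~\ref{ess_undec_inter_1}, the consistent theory $V'$ interprets the essentially undecidable theory $U$ and is therefore undecidable. Hence $V$ is essentially undecidable.

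For (ii), let $W := U \ovee V$ and let $W'$ be any consistent extension of $W$ in the language of $W$. By consistency of $W'$, at least one of $W' + P$ and $W' + \neg P$ is consistent; by the symmetry between $U$ and $V$ in the construction, we may assume $W' + P$ is consistent. Using the $W$-axioms $P \to \phi$, the theory $W' + P$ derives every $U$-axiom $\phi$, so the restriction $T := \{\phi \in \mathcal{L}_U : W' + P \vdash \phi\}$ is a consistent extension of $U$ in the language of $U$. Essential undecidability of $U$ then forces $T$ to be undecidable. Now if $W'$ were decidable, $W' + P$ would be decidable too (since $W' + P \vdash \chi$ iff $W' \vdash P \to \chi$), and restricting its theorems to $\mathcal{L}_U$ would give a decision procedure for $T$, a contradiction. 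Therefore $W'$ is undecidable.

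The only delicate point is recognising that the definition of essential undecidability refers only to consistent extensions in the theory's own language, so the case split on $P$ is what allows us to bridge from the expanded language of $W = U \ovee V$ back to the separate languages of $U$ and $V$ where the hypotheses genuinely apply. Everything else is bookkeeping: consistency of $W' + P$ or $W' + \neg P$ by the disjunction property for a single propositional parameter, and recursivity of the language-restriction operation.
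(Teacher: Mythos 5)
Your proof is correct and follows essentially the same route as the paper: part (i) by transitivity of interpretability plus Theorem~\ref{ess_undec_inter_1}, and part (ii) by splitting on $P$ versus $\neg P$ and passing to a consistent decidable extension of $U$ or of $V$. The only cosmetic difference is that in (ii) you inline the construction of the decidable extension (the $\mathcal{L}_U$-fragment of $W' + P$) where the paper instead cites Theorem~\ref{ess_undec_inter_1} on the observation that $W'+P$ interprets $U$.
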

\begin{proof}
The first claim follows immediately from Theorem \ref{ess_undec_inter_1} and the transitivity of interpretability.

To prove the second claim we assume for a contradiction that there is a consistent decidable extension 
$T$ of $U\ovee V$. Either $T$ is consistent with $P$ or with $\lnot P$. If $T$ is consistent with $P$, then $
T+P$ is a consistent decidable theory that interprets $U$, contradicting essential undecidability of $U$. 
Analogously, if $T$ is consistent with $\lnot P$, then $T+\lnot P$ is a consistent decidable theory that 
interprets $V$, contradicting essential undecidability of $V$.
\end{proof}

We note that the fact that essentially undecidable theories
are closed under binary infima implies that, if there is an
interpretability-minimal one, then there is a minimum 
with respect to interpretability. 
Thus, we have reduced Theorem~\ref{grotesmurf} to Lemma~\ref{hulpsmurf}.

This reduction still works when
we restrict ourselves to finitely axiomatised theories, since $\ovee$ also preserves 
finite axiomatisability..

\section{Finitely axiomatisable Theories}\label{finax}
In this section, we prove the non-existence of a minimal
essentially undecidable theory with respect to interpretability
for the case where we restrict ourselves to finitely axiomatised
theories.

Our result is really a triviality as soon as the required 
machinery is in place. We introduce this machinery in the
next subsection.

\subsection{Theories of a Number}
We need the theory {\sf TN} of a number. This theory is given as follows.
\begin{enumerate}[{\sf TN}1.]
\item
$\vdash x \not < 0$
\item
$\vdash (x< y \wedge y <z ) \to x < z$
\item
$\vdash x< y \vee x= y \vee y < x$
\item
$\vdash x=0 \vee \exists y\, x={\sf S}y$
\item
$\vdash{\sf S}x \not < x$
\item
$\vdash x<y \to (x <{\sf S}x \wedge y \not < {\sf S}x)$
\item
$\vdash x+0=x$
\item
$\vdash x+{\sf S}y ={\sf S}(x+y)$
\item
$\vdash x \cdot 0 = 0$
\item
$\vdash x\cdot {\sf S}y = x\cdot y + x$
\end{enumerate}

\noindent
We note that {\sf TN} allows finite models which can be identified with the natural numbers viewed as the finite von Neumann ordinals with
the added structure of zero, addition, multiplication and $<$.
Moreover, if a model of {\sf TN} has a maximal element, the model can be viewed  as a (possibly) non-standard number.
In this case we take ${\sf S}a=a$ on the maximal element and adapt plus and times accordingly.
In \cite{viss:onq17}, the reader may find some further discussion of {\sf TN}.

A $\Delta_0$-formula is \emph{pure} iff (i) all bounding terms are variables and
(ii) all occurrences of terms are in subformulas of the form ${\sf S}x=y$, $x+y=z$ and $x \cdot y = z$.
A $\Sigma_1$-sentence \emph{pure} if it is of the form $\exists \vec x\; \sigma_0\vec x$, where
$\sigma_0$ is a pure $\Delta_0$-formula. 

We can transform an arbritrary $\Sigma_1$-sentence $\sigma$ into
a pure $\Sigma_1$-sentence. See \cite{viss:onq17}, for a
sketch of the argument. In Section~\ref{finax}, we will
assume that all $\Sigma_1$-sentences $\sigma$ are rewritten in pure form.

Let $\sigma := \exists \vec y\; \sigma_0\vec y$, where $\sigma_0$ is a pure $\Delta_0$-formula. We define:
\[ [\sigma] := {\sf TN} + \exists x \, \exists \vec y < x\, \sigma_0\vec y.\] 

We note that if $\sigma$ is false, then $[\sigma]$ extends {\sf R}. Thus, if $[\sigma]$ is, in addition,
consistent, we find that $[\sigma]$ is essentially undecidable.

\subsection{The main Result for finitely axiomatisable Theories}
We prove our main result for the finitely axiomatised case.

\begin{theorem}
There is no interpretability minimal essentially undecidable finitely axiomatised theory.
\end{theorem}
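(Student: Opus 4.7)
I argue by contradiction. Assume $U$ is a finitely axiomatised interpretability-minimum essentially undecidable theory (we may assume ``minimum'' rather than ``minimal'' because $\ovee$ preserves finite axiomatisability and the reduction from Section~\ref{baco} applies). The strategy is to exhibit a pure $\Sigma_1$-sentence $\sigma$ for which $\sigma$ is false in $\mathbb{N}$, $[\sigma]$ is consistent, and $[\sigma]\not\rhd U$. Granted such a $\sigma$, the theory $[\sigma]$ is finitely axiomatised (since ${\sf TN}$ has finitely many axioms) and, by the remark just before this subsection, essentially undecidable (because $\sigma$ is false and $[\sigma]$ is consistent). So the failure $[\sigma]\not\rhd U$ directly contradicts the assumed minimality of $U$.

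The heart of the construction uses finite axiomatisability of $U$. Observe that the predicate $\mathrm{Int}_U(\tau) := $ ``$[\tau]\rhd U$'' is $\Sigma_1$ in the code of $\tau$: an interpretation is a finite syntactic datum, and the correctness condition that $[\tau]$ proves the $\iota$-translates of the finitely many axioms of $U$ is $\Sigma_1$ in $\tau$. With this, a naive $\Sigma_1$ fixed-point $\sigma\leftrightarrow\mathrm{Int}_U(\ulcorner\sigma\urcorner)$ already forces $\sigma$ to be true: if $\sigma$ were false, then $[\sigma]$ is either inconsistent (hence interprets $U$ trivially, making $\sigma$ true) or consistent (hence essentially undecidable and, by assumed minimality, interprets $U$, again making $\sigma$ true). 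So the naive diagonal rules out one of the two cases we need but is not by itself enough.

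To force falsity I would instead take $\sigma$ to be a pure $\Sigma_1$-form of $\lnot\mathrm{Con}(U)$, which is genuinely false by consistency of $U$. The consistency of $[\sigma]$ follows because essential undecidability of $U$ gives $U\rhd{\sf R}$: applying G\"odel's second incompleteness theorem to $U$ yields a model of $U+\lnot\mathrm{Con}(U)$, from which the interpretation ${\sf R}\lhd U$ extracts a model of ${\sf TN}+\lnot\mathrm{Con}(U)=[\sigma]$. The final point, $[\sigma]\not\rhd U$, is the crucial one: a hypothetical interpretation $\iota\colon U\hookrightarrow[\sigma]$ would, via a formalised soundness / Pudl\'ak-style argument, compel $[\sigma]\vdash\mathrm{Con}(U)$, contradicting $[\sigma]\vdash\sigma$ together with the consistency of $[\sigma]$.

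The main obstacle is justifying this last formalisation step for the very weak theory ${\sf TN}$, which on its own does not support reflection. This is handled by running the Pudl\'ak--Visser machinery for formalised interpretability inside the ${\sf R}$-part of $[\sigma]$ that the falsity of $\sigma$ produces for free; this is exactly what the introductory remark of this section means by ``the required machinery is in place.''
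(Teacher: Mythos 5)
Your strategy is genuinely different from the paper's, and it has two fatal gaps. The paper does \emph{not} try to exhibit a single witnessing $\sigma$; instead it runs a complexity-theoretic argument: if $A^\star$ were the minimum, then for every $\Sigma^0_1$-sentence $\sigma$ one would have ``$\sigma$ is false iff $[\sigma] \rhd A^\star$'' (true $\sigma$ gives a finite model of $[\sigma]$, false $\sigma$ makes $[\sigma]$ essentially undecidable or inconsistent), and the right-hand side is recursively enumerable while the set of false $\Sigma^0_1$-sentences is not. No specific $\sigma$, no consistency statement, no fixed point, no reflection.

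Your proof has two concrete problems. First, the step ``essential undecidability of $U$ gives $U \rhd {\sf R}$'' is false: essential undecidability does not imply interpreting {\sf R}. Indeed, the whole content of Section~\ref{mare} is that there are essentially undecidable theories (based on Janiczak's {\sf J}) that fail to interpret any given essentially undecidable theory, so in particular they need not interpret {\sf R}; and even for \emph{finitely axiomatised} theories, Hanf's examples in low RE degrees are not known (or expected) to interpret {\sf R}. So your consistency argument for $[\sigma]$ has no foundation. Second, and more seriously, the final step inverts Feferman's theorem: from $[\sigma] \rhd U$ you cannot conclude $[\sigma] \vdash \mathrm{Con}(U)$. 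Feferman's direction is $T \vdash \mathrm{Con}(U) \Rightarrow T \rhd U$; the converse fails even for strong theories. The Pudl\'ak-style partial converses that do exist give only restricted/cut-free consistency and require the interpreting theory to be sequential and to verify a decent amount of bounded arithmetic --- assumptions that spectacularly fail for ${\sf TN}$, which does not even prove induction and has finite models. Running ``the machinery inside the ${\sf R}$-part of $[\sigma]$'' is not a proof; ${\sf R}$ is far too weak to support the formalisation you need. The paper's proof avoids all of this by never attempting any internalisation: it only needs that $[\cdot]$ is a recursive map from $\Sigma^0_1$-sentences to finitely axiomatised theories and that $\{\sigma : [\sigma] \rhd A^\star\}$ is r.e.
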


\begin{proof}
Since the finitely axiomatisable essentially undecidable theories are closed under interpretability-infima,
it is sufficient to show that there is no minimum theory $A^\star$
among finitely axiomatised essentially undecidable theories.

Suppose there was such an $A^\star$. Consider any $\Sigma^0_1$-sentence $\sigma$.
If $\sigma$ is true, then $[\sigma]$ has a finite model, so, clearly, $[\sigma] \nrhd A^\star$.
If $\sigma$ is false and $[\sigma]$ is consistent, we have $[\sigma]$ is essentially undecidable, so
$[\sigma] \rhd A^\star$. If $[\sigma]$ is inconsistent, then, trivially, $[\sigma] \rhd A^\star$.
\emph{Ergo}, $\sigma$ is false iff $[\sigma] \rhd A^\star$. However, this is impossible, since the set of
$\sigma$ such that
$[\sigma] \rhd A^\star$ is recursively enumerable.
\end{proof}

We note that our proof uses very little about interpretability.
So there is a good chance that it will work for even more general notions, like forcing-interpretability. However, we did not explore this.

\section{Recursively Enumerable Theories}\label{mare}
In this section we prove our main result. 

\subsection{A Result by Janiczak}
In this subsection, we present a basic result by Janiczak.
See \cite{jani:unde53}. This result will be the main tool for both proofs of Lemma~\ref{hulpsmurf}.

We formulate an immediate consequence of the results of Section 3 of \cite{jani:unde53}.
Let {\sf J} be the theory in the language with one binary relationsymbol {\sf E} with the following (sets of) axioms.\footnote{Our theory
differs slightly from the theory considered by Janiczak in that we added {\sf J}3. We did this to make the characterisation in
Theorem~\ref{janicz} as simple as possible.}
\begin{enumerate}[{\sf J}1.]
\item
{\sf E} is an equivalence relation.
\item
There is at most one equivalence class of size precisely $n$
\item
There are at least $n$ equivalence classes with at least $n$ elements.
\end{enumerate}

We define ${\sf A}_n$ to be the sentence: there exists an equivalence class of size
precisely $n+1$. It is immediate that the ${\sf A}_n$ are mutually independent over {\sf J}.

\begin{theorem}[Janiczak]\label{janicz}
Over {\sf J}, every sentence is equivalent with a boolean combination of the ${\sf A}_n$.
\end{theorem}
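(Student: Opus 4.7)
The plan is to reduce Janiczak's theorem to the claim that any two models of {\sf J} agreeing on every ${\sf A}_n$ are elementarily equivalent, and then to establish this claim via an Ehrenfeucht--Fra\"\i ss\'e game. For the reduction, fix a sentence $\phi$ and suppose no finite Boolean combination of ${\sf A}_n$'s is equivalent to $\phi$ modulo {\sf J}. By compactness the set of complete ${\sf A}$-profiles realised by models of ${\sf J} + \phi$ is a closed subset of the Stone space $2^\omega$, and likewise for ${\sf J}+\neg\phi$; disjointness of these closed sets would let one separate them by a clopen, i.e., by a Boolean combination of ${\sf A}_n$'s, so non-disjointness produces models $M_+ \models {\sf J} + \phi$ and $M_- \models {\sf J} + \neg \phi$ with identical ${\sf A}$-profiles, contradicting the elementary equivalence claim.

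For the elementary equivalence claim, fix $M, N \models {\sf J}$ with $S_M = S_N$, where $S$ denotes the set of finite {\sf E}-class sizes realised in the model. Call two (possibly infinite) cardinals $s, t$ \emph{$r$-equivalent} if $s = t$ or both $s, t \geq r$. Duplicator's winning strategy in the $r$-round Ehrenfeucht--Fra\"\i ss\'e game on $M, N$ maintains the invariant that, after each round, the played tuples $\bar a, \bar b$ induce isomorphic {\sf E}-substructures via $a_i \mapsto b_i$, and for each matched pair of {\sf E}-classes $(C, D)$, the cardinalities $|C|$ and $|D|$ are $r$-equivalent.

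The strategy: when Spoiler plays an element $a$ in an already-matched class $C$ with partner $D$, Duplicator picks any previously-unused element of $D$, which exists because either $|D| = |C|$ and Spoiler's legal choice forces $|C|$ to exceed the previously-played count, or both $|C|, |D| \geq r$ while fewer than $r$ elements of $D$ have been played. When Spoiler plays in a fresh class $C$ with $|C| < r$, then $|C| \in S_M = S_N$, and by {\sf J}2 there is a unique class of size $|C|$ in $N$, necessarily untouched (else by symmetry $C$ itself would be touched, contradicting freshness); Duplicator picks an element from it. When Spoiler plays in a fresh class $C$ with $|C| \geq r$, {\sf J}3 applied with $n = r$ gives at least $r$ classes of size $\geq r$ in $N$, of which at most $k < r$ have been touched, so Duplicator picks an element of an untouched one.

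The main technical obstacle is isolating the right invariant. The notion of $r$-equivalence is exactly what hides the distinction between large finite classes and infinite classes from formulas of quantifier rank $\leq r$, while {\sf J}2 rigidly pins down the correspondence of finite classes of size $< r$ (via $S_M = S_N$), and {\sf J}3 supplies enough disposable large classes. With these three ingredients working in tandem, the invariant survives each round by the case analysis above; Duplicator wins the $r$-round game for every $r$, hence $M \equiv N$, and by the reduction every sentence is equivalent modulo {\sf J} to a Boolean combination of ${\sf A}_n$'s.
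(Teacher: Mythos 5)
Your proof is correct, and it takes a genuinely different route from the paper's. The paper performs a direct syntactic quantifier elimination via ``base-$n$ configurations'' (atomic types of a tuple enriched with class-size information bounded by $n$), showing that {\sf J} decides quantifier-free formulas relative to a configuration and that existentially quantifying a configuration on a short enough tuple again yields a configuration. You instead prove the semantic counterpart---models of {\sf J} with the same ${\sf A}_n$-theory are elementarily equivalent---via an Ehrenfeucht--Fra\"{\i}ss\'{e} game, and transfer this to the syntactic claim using the Stone-space fact that disjoint closed subsets of $2^\omega$ are separated by clopens. The game argument is sound: the fixed threshold $r$ (rather than a round-by-round degrading one) does work here because whenever a matched pair of classes has a side of size $<r$, your strategy forces exact size equality, with {\sf J}2 uniquely pinning that class down, while {\sf J}3 keeps Duplicator supplied with fresh large classes; and since at most $r$ elements are ever played, the surplus never runs out. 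What the paper's version buys that yours does not, on its face, is effectiveness: the configuration computation is directly algorithmic, and the later arguments (Theorem~\ref{supersmurf} and the proof of Shoenfield's theorem) explicitly \emph{compute} the boolean combination equivalent to a given sentence. This is not a real loss---since {\sf J} is recursively axiomatised and you have established that an equivalent combination always exists, it can be found by blind proof search---but it is worth stating explicitly if your argument is to do the same downstream work.
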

\begin{proof}
A base $n\ge 1$ configuration $C(\vec{x}\,)$ of variables $\vec{x}$ consists 
of two equivalence relations ${=_C}\subseteq \mathsf{E}_C$ on the set $\{\vec{x}\,\}$ of 
all variables from $\vec{x}$ together with a set ${\sf S}_C\subseteq \{1,\ldots,n-1\}$ and a 
function ${\sf s}_C\colon \{\vec{x}\}\to {\sf S}_C\cup \{n\}$, such that we have:
\begin{itemize}
\item
$x \mathrel{\sf E}_C x' \Rightarrow {\sf s}_C(x)={\sf s}_C(x')$;
\item
for any $x\in \{\vec{x}\,\}$, the
$ \mathrel{\sf E}_C$-equivalence class of $x$ is split into at most ${\sf s}_C(x)$-many 
$=_C$-equivalence classes;
\item
for any $i\in {\sf S}_C$, its ${\sf s}_C$-preimage is 
either empty or a single $\mathsf{\sf E}_C$-equivalence class. 
\end{itemize}
Working in 
$\mathsf{J}$, we say that $\vec{x}$ satisfies the configuration $C(\vec{x}\,)$ 
iff we have the following:
\begin{itemize}
    \item 
the restriction of $=$ to $\{\vec{x}\,\}$ is $=_C$  
and
the restriction of {\sf E} to $\{\vec{x}\,\}$ is ${\sf E}_C$
(this includes both positive and negative information);
\item
for each $i\in \{1,\ldots,n-1\}$, we have: $i$ is in ${\sf S}_C$ iff there is an $\mathsf{\sf E}$-equivalence class 
of the size $i$;
\item
for each $x\in \{\vec{x}\,\}$, if ${\sf s}_C(x)<n$, then $x$ is in the 
$\mathsf{E}$-equivalence class of the size ${\sf s}_C(x)$;
\item
for each $x\in \{\vec{x}\,\}$, if ${\sf s}_C(x)=n$, 
then $x$ is in an $\mathsf{E}$-equivalence class of the size $\ge n$.
\end{itemize}
We denote the set of all base $n$ configurations depending on the variables 
$\vec{x}$ as $\mathfrak{C}_n(\vec{x})$. Clearly, we can express the 
fact that $\vec{x}$ satisfies configuration $C(\vec{x})$ by a 
$\mathsf{J}$-formula which we will confuse with $C(\vec{x})$. 

Now observe the following facts:
\begin{enumerate}[1.]
 \item \label{disj_config} $\mathsf{J}\vdash \bigvee\limits_{C(\vec{x})\in \mathfrak{C}_n(\vec{x}\,)}C(\vec{x}\,)$.
 \item \label{dist_config} For distinct $C_1(\vec{x}\,),C_2(\vec{x}\,)\in \mathfrak{C}_n(\vec{x}\,)$ we have 
 $\mathsf{J}\vdash \lnot\, (C_1(\vec{x}\,)\land C_2(\vec{x}\,))$.
 \item \label{neg-config} For each $C(\vec{x}\,)\in \mathfrak{C}_n(\vec{x}\,)$,
 we have $\mathsf{J}\vdash \lnot\, C(\vec{x}\,) \leftrightarrow \bigvee\limits_{C'(\vec{x}\,)\in  \mathfrak{C}_n(\vec{x}\,)\setminus\{C(\vec{x}\,)\}}C'(\vec{x}\,)$.\\
 This follows from (\ref{disj_config}) and (\ref{dist_config}).
 \item \label{config-imp-atom} Given an
 atomic $\varphi(\vec{x}\,)$ and $C(\vec{x}\,)\in \mathfrak{C}_n(\vec{x})$, we have
 either $\mathsf{J}\vdash C(\vec{x}\,)\to \varphi(\vec{x}\,)$ or $\mathsf{J}\vdash C(\vec{x}\,)\to \lnot\, \varphi(\vec{x}\,)$.
 \item \label{config-imp-q-f}
 Given quantifier-free $\varphi(\vec{x}\,)$ and $C(\vec{x})\in \mathfrak{C}_n(\vec{x}\,)$, 
 either $\mathsf{J}\vdash C(\vec{x}\,)\to \varphi(\vec{x}\,)$ or $\mathsf{J}\vdash C(\vec{x}\,)\to \lnot\, \varphi(\vec{x}\,)$.
 This follows from (\ref{config-imp-atom}).
 \item \label{q-f-eq-config} For any quantifier-free $\varphi(\vec{x}\,)$ and $n\ge 1$,
 there is $\mathcal C\subseteq \mathfrak{C}_n(\vec{x})$, such that 
 $\mathsf{J}\vdash \varphi(\vec{x}\,)\leftrightarrow \bigvee\limits_{C(\vec{x}\,)\in \mathcal C}C(\vec{x}\,)$.
 This follows from (\ref{disj_config}) and (\ref{config-imp-q-f}).
 \item \label{q-elim-J} For  $C(\vec{x},y)\in \mathfrak{C}_n(\vec{x},y)$,
 let $\exists_yC(\vec{x},y) \in \mathfrak{C}_n(\vec{x})$ be the configuration $C'$ where 
 $S_{C'}=S_C$, and $=_{C'},\mathsf{E}_{C'},s_{C'}$ are the restrictions of 
 $=_C,\mathsf{E}_C,s_C$, respectively, to the new set of variables. If the size of $\vec{x}$ is at most $n-1$, then we have
  $\mathsf{J}\vdash \exists_yC(\vec{x},y)\leftrightarrow \exists y\, C(\vec{x},y)$.
\end{enumerate}

We claim that for any formula $\varphi(\vec{x}\,)$ of the language of $\mathsf{J}$ 
there is an $n$ and a $\mathcal C\subseteq \mathfrak{C}_n(\vec{x}\,)$,
such that \[\mathsf{J}\vdash \varphi(\vec{x}\,)\leftrightarrow \bigvee\limits_{C(\vec{x}\,)\in \mathcal C}C(\vec{x}\,).\]
For this, we put $\varphi(\vec{x}\,)$ in prenex normal form 
$Q_1 y_1,\ldots, Q_m y_m\, \psi(\vec{x},y_1,\ldots,y_m)$. 
We fix $n$ to be the sum of $m$ and the size of $\vec{x}$. We first use the fact (\ref{q-f-eq-config}) to transform the matrix $\psi(\vec{x},\vec{y}\,)$ into the disjunction of configurations from $\mathfrak{C}_n(\vec{x},y_1,\ldots,y_m)$. 
And, secondly, using facts (\ref{neg-config}) and (\ref{q-elim-J}),
we prove, by induction on $0\le i<m$, that $Q_{m-i} y_{m-i}\ldots Q_m y_m\, \psi(\vec{x},y_1,\ldots,y_m)$ is equivalent to a disjunction of configurations from $\mathfrak{C}_n(\vec{x},y_1,\ldots,y_{m-i})$. In the case of $i=m-1$ we obtain the desired representation for $\varphi(\vec{x}\,)$.

Combining the claim above with the observation that any $C()\in \mathfrak{C}_n()$ is $\mathsf{J}$-provably equivalent to a formula of the
form
$\bigwedge\limits_{i\in {\sf S}_C}\mathsf{A}_{i-1}\wedge \bigwedge\limits_{i\in \verz{1,\dots n-1}\setminus {\sf S}_C}\neg\, \mathsf{A}_{i-1}$, 
we see that, over $\mathsf{J}$, any sentence is equivalent to a boolean combination of the $\mathsf{A}_n$. 
\end{proof}

\begin{remark}
Let $\mathsf{B}_n(x)$ be $\mathsf{J}$-formula expressing that $x$ is in an equivalence class of size $>n$. The proof above, in fact shows that $\mathsf{J}$ enjoys quantifier elimination in the signature extended by all $\mathsf{A}_n$ and $\mathsf{B}_n(x)$. Indeed, it is easy to see that over $\mathsf{J}$ any configuration $C(\vec{x}\,)$ (as defined in the proof of Theorem \ref{janicz}) is equivalent to a quantifier-free formula in the extended signature and, in this proof,
we established that over $\mathsf{J}$ any formula is equivalent to a disjunction of configurations.
\end{remark}

\begin{remark}
Theorem~\ref{janicz} tells us that the Lindenbaum algebra of {\sf J} is isomorphic with respect to recursive boolean isomorphisms to
the free boole algebra on countably many generators. We note that, by the results of Pour-El \&\ Kripke~\cite[Theorem~2]{pour:dedu67},
the Lindenbaum algebra of any consistent RE theory $U$ that interprets the Tarski-Mostowski-Robinson theory {\sf R} is
recursively isomorphic to the Lindenbaum algebra of, say, Peano Arithmetic. Since this is a countable boole algebra without
atoms, it follows that this algebra is isomorphic to the free boole algebra on countably many generators. We note that
recursive isomorphism is really about \emph{numbered objects} or \emph{numerations}. Thus, the Lindenbaum algebras of
{\sf J} and of, e.g., {\sf PA} are isomorphic but they cannot, \emph{qua numbered algebras}, be recursively isomorphic, since
{\sf J} is decidable and {\sf PA} is not. 
\end{remark}

\subsection{A First Proof of Theorem~\ref{grotesmurf}}
We prove a theorem that provides, for every essentially undecidable RE theory, a class of theories 
that do not interpret it. The members of this class can instantiate many desirable recursion theoretic
properties. For example, there is a member in any RE degree.

Let $\mathcal X$ be a set of numbers. We say that $W$ is a ${\sf J},\mathcal X$-theory when $W$ is axiomatised
over {\sf J} by boolean combinations of sentences ${\sf A}_s$ for $s\in \mathcal X$.

\begin{theorem}\label{supersmurf}
Consider any essentially undecidable RE theory $U$. Then, we can effectively find a
recursive set $\mathcal X$ \textup(from an index of $U$\textup) such that no consistent ${\sf J},\mathcal X$-theory interprets $U$. 
\end{theorem}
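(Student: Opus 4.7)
The plan is to build $\mathcal{X}$ by a stage-by-stage diagonalisation against the countable list $\iota_0,\iota_1,\ldots$ of all interpretations, reducing everything to propositional combinatorics via Janiczak's quantifier-elimination and manufacturing witnesses from the essential undecidability of $U$. By Theorem~\ref{janicz}, for each $\iota$ and $\phi\in U$ we may effectively compute a boolean combination $B_{\phi^\iota}$ of the $\mathsf{A}_n$'s that is $\mathsf{J}$-equivalent to $\phi^\iota$. Because the axioms of any $\mathsf{J},\mathcal{X}$-theory $W$ involve only $\mathsf{A}_s$ with $s\in\mathcal{X}$, the set $V'_W\subseteq\{0,1\}^\mathcal{X}$ of $\mathcal{X}$-valuations compatible with $W$ is non-empty iff $W$ is consistent, and $W\vdash\phi^\iota$ iff the $\forall\tau$-projection $(B_{\phi^\iota})^*$, obtained by quantifying out the variables $\mathsf{A}_n$ with $n\notin\mathcal{X}$ that appear in $B_{\phi^\iota}$, holds at every $\sigma\in V'_W$. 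So it suffices to arrange, for every $\iota$, a single $\phi_\iota\in U$ with $(B_{\phi_\iota^\iota})^*\equiv\bot$.

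\textbf{Key step.} For any finite $S\subseteq\mathbb N$ and any $\iota$ we can effectively find such a witness relative to $S$, that is, some $\phi\in U$ for which the $\forall$-projection $(B_{\phi^\iota})^*_S$ over the variables outside $S$ is identically false on $\{0,1\}^S$. Indeed, for each of the $2^{|S|}$ valuations $\sigma\colon S\to\{0,1\}$ the finite extension $E_\sigma:=\mathsf{J}+\{\mathsf{A}_s^{\sigma(s)}:s\in S\}$ is a consistent decidable extension of $\mathsf{J}$, so Theorem~\ref{ess_undec_inter_1} and the essential undecidability of $U$ give $E_\sigma\not\rhd U$, whence we effectively locate some $\phi_\sigma\in U$ with $E_\sigma\not\vdash\phi_\sigma^\iota$. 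Setting $\phi:=\bigwedge_\sigma\phi_\sigma\in U$, each $\sigma$ admits a full valuation extension $\tilde\nu$ with $B_{\phi^\iota}(\tilde\nu)=0$, which is precisely $(B_{\phi^\iota})^*_S\equiv\bot$.

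\textbf{Construction.} Build $\mathcal{X}$ in stages, maintaining a decided initial segment. At stage $e$, apply the Key Step with $S=\mathcal{X}_e$ (the current committed $\mathcal{X}$-part) to obtain $\phi_e\in U$; then commit every $n$ such that $\mathsf{A}_n$ appears in $B_{\phi_e^{\iota_e}}$ and is currently undecided to $\bar{\mathcal{X}}$, extend the decided segment past the largest such $n$, and add one fresh index to $\mathcal{X}$ to keep it infinite. Since this commitment freezes the intersection of $\mathcal{X}$ with the $\mathsf{A}_n$-variables of $B_{\phi_e^{\iota_e}}$, a short restriction argument shows $(B_{\phi_e^{\iota_e}})^*\equiv\bot$ in the final $\mathcal{X}$ as well, so no consistent $\mathsf{J},\mathcal{X}$-theory interprets $U$ via $\iota_e$. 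The resulting $\mathcal{X}$ is recursive and effectively computed from an index of $U$.

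\textbf{Main obstacle.} The delicate point is that $(B_{\phi^\iota})^*$ generally strengthens as $\mathcal{X}$ grows, so later stages could a priori undo earlier work; the freezing of the still-undecided $\mathsf{A}_n$-variables of $B_{\phi_e^{\iota_e}}$ into $\bar{\mathcal{X}}$ is the crucial device preventing this. The remainder is routine bookkeeping, combining closure of $U$ under conjunction with the decidability of finite extensions of $\mathsf{J}$.
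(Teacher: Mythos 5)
Your proof is correct and follows essentially the same strategy as the paper's: both use the decidability of finite extensions of $\mathsf{J}$ together with the essential undecidability of $U$ to extract, for each translation $\iota$, witness theorems of $U$ whose $\iota$-images reduce over $\mathsf{J}$ to boolean combinations of the $\mathsf{A}_n$, and then thin $\mathcal{X}$ so that the relevant $\mathsf{A}_n$-variables escape $\mathcal{X}$ and can be set freely to falsify the witness. The only real difference is presentational: the paper defines $\mathcal{X}$ at once as the range of a fast-growing recursive function $F$ and argues via the gaps between consecutive $F$-values, whereas you run a stage-by-stage diagonalisation and bundle the $2^{|S|}$ witnesses $\phi_\sigma$ into a single conjunction $\phi_e$ per interpretation before freezing its stray variables out of $\mathcal{X}$.
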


\begin{proof}
Let ${\sf C}_{n,0},\dots, {\sf C}_{n,2^n-1}$ be an enumeration of all conjunctions
of $\pm {\sf A}_i$, for $i<n$.
Suppose $U$ is an essentially undecidable RE theory.
Let $\upsilon_0,\upsilon_1,\dots$ be an effective enumeration of the theorems
of $U$. Let $\tau_0,\tau_1,\dots$ be an effective enumeration of all translations from
the $U$-language into the {\sf J}-language.

Consider $n$, $\tau_i$ and ${\sf C}_{n,j}$, for $j<2^n$.
We claim that, for some $k$, we have ${\sf J}+{\sf C}_{n,j} \nvdash \upsilon^{\tau_i}_k$.
If not, then $\tau_i$ would carry an interpretation of $U$ in ${\sf J}+{\sf C}_{n,j}$.
This contradicts the fact that ${\sf J}+{\sf C}_{n,j}$ is decidable.

Thus, we can effectively find a number $p_{n,i,j}$ as follows.
We find the first $k$ such that ${\sf J}+{\sf C}_{n,j} \nvdash \upsilon^{\tau_i}_k$. 
This can be effectively done since ${\sf J}+{\sf C}_{n,j} $ is decidable. 
Then, we reduce, over {\sf J}, the sentence $\upsilon^{\tau_i}_k$ to a boolean combination of
${\sf A}_s$. Let $p_{n,i,j}$ be the supremum of the $s+1$ such that ${\sf A}_s$ occurs in this boolean combination.

We define $f(n,i)$ to be the maximum of the $p_{n,i,j}$ and $n$.
Let $F(0) := 0$ and let $F(k+1):= f(F(k)+1,k)$. Clearly $F$ is recursive and strictly increasing.
Let $\mathcal X$ be the range of $F$. Clearly, $\mathcal X$ is recursive.

Let $W$ be any consistent ${\sf J},\mathcal X$-theory.
Suppose we would have $K:W \rhd U$.
Let the underlying translation of $K$ be $\tau_{n^\ast}$.

Clearly, there is a $j^\ast$ such that 
$W+{\sf C}_{F(n^\ast)+1,j^\ast}$ is consistent. (This is a non-constructive step.)

By construction, there is a
 $\phi$ with $U \vdash \phi$ and ${\sf J}+{\sf C}_{F(n^\ast)+1,j^\ast}\nvdash \phi^K$, such 
 that  ${\sf J} \vdash \phi^K \iff \chi$, where $\chi$ is a
 boolean combination  of ${\sf A}_s$, where $s<F(n^\ast +1)$.
We note that none of the ${\sf A}_s$ with $F(n^\ast) < s < F(n^\ast+1)$ occurs in the axiomatisation of $W$.
So, there is a ${\sf C}_{F(n^\ast+1),p}$ that extends  ${\sf C}_{F(n^\ast)+1,j^\ast}$ such that ${\sf J}+ {\sf C}_{F(n^\ast+1),p} \vdash \neg\, \phi^K$.
On the other hand, ${\sf C}_{F(n^\ast+1),p}$ is clearly consistent with $W$, by the mutual independence of the ${\sf A}_\ell$.
A contradiction.
\end{proof}

It now follows:

\begin{theorem}\label{aanzienlijkesmurf}
For every essentially undecidable RE theory $U$ there is an essentially undecidable RE theory $W$ such that $W \nrhd U$.
We can find an index for $W$ effectively from an index of $U$.
\end{theorem}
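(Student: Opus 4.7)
The plan is to invoke Theorem~\ref{supersmurf} to produce the recursive set $\mathcal X$ witnessing that no consistent $\mathsf J,\mathcal X$-theory interprets $U$, and then to exhibit a single consistent $\mathsf J,\mathcal X$-theory $W$ that is itself essentially undecidable. Any such $W$ will automatically satisfy $W \nrhd U$ by Theorem~\ref{supersmurf}, so the only real work is to construct an essentially undecidable $\mathsf J,\mathcal X$-theory uniformly in an index of $U$.

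For the construction I would fix a standard pair of disjoint recursively inseparable r.e.\ sets $A_0,B_0\subseteq \mathbb N$ and transport them into $\mathcal X$ along the order-preserving recursive bijection $\mathbb N\to \mathcal X$ (available since $\mathcal X$ is infinite and recursive), obtaining disjoint r.e.\ sets $A,B\subseteq \mathcal X$ that are still recursively inseparable. Then I would define
\[ W \;:=\; \mathsf J \;+\; \{\mathsf A_s : s\in A\} \;+\; \{\lnot\, \mathsf A_s : s\in B\}. \]
By inspection this is a $\mathsf J,\mathcal X$-theory, it is r.e.\ uniformly in an index of $U$, and it is consistent, since the $\mathsf A_n$ are mutually independent over $\mathsf J$ and hence any consistent partial truth assignment to them is realised in some model of $\mathsf J$.

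To see that $W$ is essentially undecidable I would argue by contradiction. Suppose $T$ is a consistent decidable extension of $W$ in the language of $\mathsf J$, and set $C := \{s\in \mathcal X : T\vdash \mathsf A_s\}$. Since $T$ is decidable and $\mathcal X$ is recursive, $C$ is recursive. Since $T\supseteq W$ we have $A\subseteq C$, and since $T$ is consistent with $\lnot\,\mathsf A_s$ for each $s\in B$ already in $W$, we have $C\cap B=\emptyset$. Thus $C$ recursively separates $A$ from $B$, contradicting their recursive inseparability.

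I do not expect a serious obstacle here: Theorem~\ref{supersmurf} has already done the substantive work of arranging that $U$ cannot be interpreted in any consistent $\mathsf J,\mathcal X$-theory, while the essential undecidability of $W$ reduces, via the fact that $\mathsf J$ sees the $\mathsf A_s$ as free independent generators, to the classical recursive-inseparability trick. The only delicate point is bookkeeping the uniformity in an index of $U$, which is routine.
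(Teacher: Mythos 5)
Your proposal is correct and follows essentially the same route as the paper: both invoke Theorem~\ref{supersmurf} to get $\mathcal X$, embed a pair of recursively inseparable RE sets into $\mathcal X$, and take $W$ to be $\mathsf J$ together with the corresponding positive and negative occurrences of the $\mathsf A_n$. You merely spell out details the paper leaves implicit, namely the transport along a recursive bijection $\mathbb N\to\mathcal X$ and the verification that such a $W$ is consistent and essentially undecidable via the recursive-inseparability argument.
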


\begin{proof}
Suppose $U$ is an essentially undecidable RE theory.
Let $\mathcal X$ be the recursive set promised for $U$ by Theorem~\ref{supersmurf}.
Let $\mathcal Y,\mathcal Z$ be a pair of recursively inseparable RE sets that are subsets of $\mathcal X$. We define
$W := {\sf J} +\verz{{\sf A}_n \mid n \in \mathcal Y} +   \verz{\neg\, {\sf A}_n \mid n \in \mathcal Z}$. 
Then $W$ is a consistent essentially undecidable RE theory
and $W \nrhd U$.
\end{proof}

Clearly, Theorem~\ref{aanzienlijkesmurf}  gives us Lemma~\ref{hulpsmurf} and, thus, Theorem~\ref{grotesmurf}.

\begin{remark}
It is easily seen that we can give the theory $W$ of Theorem~\ref{supersmurf} many extra properties. For example, it can be
Turing persistent and at the same time of any given non-zero RE degree. See the next section for these notions.
\end{remark}

\subsection{A Result by Shoenfield}
We present some basic ideas from Shoenfield's paper \cite{scho:degr58}.
We will use Shoenfield's result in the proof of Lemma~\ref{hulpsmurf}.

We first need a purely recursion theoretic result. We write $\leq_{\sf T}$ for Turing reducibility.
Our proof is just a minor variation of Shoenfield's proof.

\begin{theorem}[Shoenfield]\label{schonesmurf}
Let $\setvara$ be any recursively enumerable \textup(RE\textup) set with index $a$. Then, we can effectively find 
RE-indices $b$ of a set $\setvarb$ and $c$ of a set $\setvarc$ from $a$,
such that we have:
\begin{enumerate}[i.]
\item
$\setvarb\leq_{\sf T} \setvara$  and $\setvarc \leq_{\sf T} \setvara$.
\item 
 $\setvarb\cap \setvarc = \emptyset$.
 \item
 Suppose $\setvard$ is an RE set that separates $\setvarb$ and $\setvarc$, i.e., $\setvarb\subseteq \setvard$ and $\setvarc \cap \setvard = \emptyset$, 
 then $\setvara \leq_{\sf T} \setvard$.
 \end{enumerate}
\end{theorem}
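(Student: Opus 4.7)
The plan is to mimic Shoenfield's classical construction, building $\mathcal{B}$ and $\mathcal{C}$ by stages with fresh witnesses attached to each pair $(e, n)$. For each such pair I reserve an infinite computable sequence of pairwise distinct witnesses $u^{(0)}_{e,n}, u^{(1)}_{e,n}, \ldots$ with $u^{(k)}_{e,n} \geq n$, and maintain a pointer $k_s(e,n)$ initially $0$. The stagewise rule is: at stage $s$, for each still-active pair $(e, n)$ with current witness $u := u^{(k_s)}_{e,n}$, I place $u$ into $\mathcal{B}$ and deactivate the pair if $n \in \mathcal{A}_s$; otherwise, if $u \in W_{e,s}$, I place $u$ into $\mathcal{C}$ provided $\mathcal{A}$ has just enumerated an element at or below $u$ (the $\mathcal{A}$-permission condition), and in either sub-case I advance the pointer.

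Clauses (i) and (ii) will be verified directly. Disjointness follows from the commitment discipline: each witness is committed at most once. For (i), any $\mathcal{C}$-enumeration of a witness $x$ occurs only at a stage at which $\mathcal{A}$ newly enumerates an element below $x$; there are at most $x+1$ such stages, all locatable using the $\mathcal{A}$-oracle, and the construction can be simulated at each of them effectively, so $x \in \mathcal{C}$ is decidable relative to $\mathcal{A}$. The same reasoning handles $\mathcal{B}$, with the relevant stage being the first at which the associated $n$ enters $\mathcal{A}$.

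The main obstacle is (iii). For any RE separator $W_e$, the construction enforces the dichotomy that the $(e,n)$-strategy produces a $\mathcal{B}$-commitment if and only if $n \in \mathcal{A}$: under the separator hypothesis, no $\mathcal{C}$-commitment can fire for $(e,n)$ before a possible $\mathcal{B}$-commitment, since such a $\mathcal{C}$-commitment would put a witness into $\mathcal{C} \cap W_e$; thus the strategy remains active up to the first stage at which $n$ enters $\mathcal{A}$. To convert this into a reduction computable from the $W_e$-oracle I exploit that pointer advances require witnesses to enter $W_e$ and are therefore fully visible through the oracle.

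The technical core will be making the detection of a $\mathcal{B}$-commitment effective from $W_e$ alone. The standard device is to augment each $\mathcal{B}$-commitment with a companion $\mathcal{C}$-commitment of the immediately following witness $u^{(k_s+1)}_{e,n}$, which is legitimate because the just-enumerated $n$ is at most the companion witness and furnishes the required $\mathcal{A}$-permission. This creates a detectable ``drop'' in $W_e$'s pattern on the witness sequence whenever a $\mathcal{B}$-commitment has fired: the $W_e$-oracle search for the least $k_0$ with $u^{(k_0)}_{e,n} \notin W_e$ then terminates and pinpoints $k^* = k_0 - 1$, after which one simulates the construction up to the corresponding stage and reads off whether a $\mathcal{B}$-commitment occurred. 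I expect the delicate point to be the compatibility of the $\mathcal{A}$-permitting discipline (needed for (i)) with this decoding (needed for (iii)), which is exactly where Shoenfield's argument concentrates its work.
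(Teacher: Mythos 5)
Your proposal takes a genuinely different route from the paper's, and it contains a gap in part (iii) that, as written, the construction does not close.

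The paper's proof is a direct definition, not a stage construction. Setting ${\sf Z} = \{ x : x \in W_{(x)_1} \}$, the paper defines (with unique-witness $T$-predicates) $x\in\mathcal{B}$ iff the witness for $(x)_0\in\mathcal{A}$ appears strictly before any witness for $x\in{\sf Z}$, and $x\in\mathcal{C}$ iff $(x)_0\in\mathcal{A}$ and some witness for $x\in{\sf Z}$ appears no later. The role of embedding the index $(x)_1$ into the ${\sf Z}$-clause is diagonal: for a candidate separator $\mathcal{D}=W_d$, along the single column $x=\langle w,d\rangle$ one has $x\in\mathcal{D}$ iff $x\in{\sf Z}$, so a single oracle query $\langle w,d\rangle\in\mathcal{D}$ settles everything. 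If the answer is yes, the ${\sf Z}$-computation halts, and comparing its halting time with the $\mathcal{A}$-witness decides whether $\langle w,d\rangle\in\mathcal{B}$, hence whether $w\in\mathcal{A}$; if the answer is no, $\langle w,d\rangle\notin\mathcal{B}$, forcing $w\notin\mathcal{A}$. Totality of the reduction comes for free because only one diagonal witness per pair $(w,d)$ is ever interrogated.

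Your construction instead attaches to each pair $(e,n)$ an infinite column of witnesses $u^{(0)}_{e,n},u^{(1)}_{e,n},\dots$ and a pointer that advances whenever the current witness appears in $W_{e,s}$, ``in either sub-case'' of the permitting test. The companion-$\mathcal{C}$ device correctly guarantees that whenever a $\mathcal{B}$-commitment fires at pointer position $k^*$, the $W_e$-oracle sees $u^{(j)}\in W_e$ for $j\le k^*$ and $u^{(k^*+1)}\notin W_e$, so the search for the least $k_0$ with $u^{(k_0)}\notin W_e$ halts at $k^*+1$. The gap is that this search can diverge when no $\mathcal{B}$-commitment fires: suppose $n\notin\mathcal{A}$ while every $u^{(j)}_{e,n}$ happens to lie in $W_e$. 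Then $(e,n)$ is never deactivated, the pointer advances forever, no commitment for $(e,n)$ ever fires (the $\mathcal{C}$-clause is blocked by separation), and separation is not violated since those witnesses sit in $W_e\setminus(\mathcal{B}\cup\mathcal{C})$, which a separator may freely contain. In that situation your $W_e$-oracle search for $k_0$ loops, and no parallel semi-decision rescues you because $n\in\mathcal{A}$ is also false. Nothing in your permitting discipline excludes this: you advance the pointer with or without permission, so the number of advances is not $\mathcal{A}$-bounded or $W_e$-bounded.

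This is not a small patch away from correct. Advancing the pointer only with $\mathcal{A}$-permission ties every advance to a $\mathcal{C}$-commitment, which the separator hypothesis then forbids, freezing the pointer at $0$ and destroying the very $W_e$-``drop'' your decoding reads; keeping the column finite would require an a priori bound the oracle cannot supply. The moral of the paper's construction is exactly this: interrogate $W_e$ at one diagonal point $\langle w,d\rangle$ rather than scanning a column of $W_e$. Your closing remark correctly locates the tension between the $\mathcal{A}$-permitting needed for (i) and the $W_e$-decoding needed for (iii); the concrete obstruction is the divergence described above, and the proposal does not resolve it.
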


We  represent $x\in \setvara$ by the formula $\exists  y\, {\sf T}_1(a,x,y)$. We assume that the computation $y$ is unique when it exists.
We write, e.g., $(x)_0 \in \setvara$ for $\exists  y\, {\sf T}_1(a,(x)_0,y)$. We treat other indices similarly.
The notation $\exists x\, \phi \leq \exists y \,\psi$ means
$\exists x\, (\phi \wedge \forall y < x\, \neg\, \psi)$ and 
$\exists x\, \phi < \exists y \,\psi$ means
$\exists x\, (\phi \wedge \forall y \leq x\, \neg\, \psi)$.

\begin{proof}
We define:
\begin{itemize}
\item
$x\in {\sf Z}$ iff $\exists z\, {\sf T}_1((x)_1,x,z)$.
\item
$x\in \setvarb$ iff $((x)_0\in \setvara) < (x\in {\sf Z})$.
\item
$x\in \setvarb^\bot$ iff $(x\in {\sf Z})\le ((x)_0\in \setvara)$.
\item
$x\in \setvarc$ iff $(x)_0\in \setvara \wedge x \in \setvarb^\bot$.
\end{itemize}

Claims (i) and (ii) are trivial.\footnote{Note that $\setvarb^\bot$ need not be
Turing reducible to $\setvara$.} 

Consider any $\setvard$ with index $d$. We note that $\tupel{w,d}\in \setvard$ iff $\tupel{w,d} \in {\sf Z}$.
Suppose $\setvard$ separates $\setvarb$ and $\setvarc$.

We  first show $w\in \setvara$ iff $\tupel{w,d}\in \setvarb$.
From right to left is immediate. Suppose 
$w\in \setvara$. Then either $\tupel{w,d}\in \setvarb$ or $\tupel{w,d}\in \setvarc$.
In case  $\tupel{w,d}\in \setvarc$, we find $\tupel{w,d}\in {\sf Z}$, by the definition of $\setvarb^\bot$. Hence
$\tupel{w,d} \in \setvard$. \emph{Quod non}, since $\setvarc$ and $\setvard$ are disjoint. So, $\tupel{w,d}\in \setvarb$.

In case $\tupel{w,d}\in \setvard$, we have $\tupel{w,d}\in {\sf Z}$. So, we can effectively
determine whether $\tupel{w,d}\in \setvarb$ and, thus, whether $w\in \setvara$.
In case $\tupel{w,d}\not\in \setvard$, we have $\tupel{w,d}\not\in \setvarb$, and, hence, $w\not\in \setvara$.
\end{proof}

Let us say that an RE theory $U$ is \emph{Turing-persistent} iff $U$ is consistent and, whenever $U\subseteq V$, where $V$ is RE and consistent, we have
$U \leq_{\sf T} V$. It is easy to see that the Turing-persistence of $U$ implies: if $U \lhd V$, then $U \leq_{\sf T} V$, whenever $V$ is consistent.
We note that, if $U$ is Turing persistent and undecidable, then it is essentially undecidable.

Consider an RE set $\setvara$ with index $a$. Let $\setvarb$ and $\setvarc$ be the sets constructed above.
Let ${\sf sch}(a) := {\sf J}+ \verz{{\sf A}_n \mid n\in \setvarb} + \verz{\neg \,{\sf A}_m\mid m \in \setvarc}$.
We have:

\begin{theorem}[Shoenfield]
The theory ${\sf sch}(a)$ has the same Turing degree as $\mathcal A$. Moreover, the theory
is Turing persistent. It follows that, if $\mathcal A$ is undecidable, then ${\sf sch}(a)$ is essentially
undecidable. Thus, there is an essentially undecidable theory in every
RE, non-recursive Turing degree.
\end{theorem}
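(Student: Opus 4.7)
The plan is to verify, in order, (1) ${\sf sch}(a)\equiv_{\sf T} \setvara$; (2) Turing persistence of ${\sf sch}(a)$; (3) the corollary about essentially undecidable theories. For any input $\setvara$, the theory ${\sf sch}(a)$ is automatically consistent, because mutual independence of the ${\sf A}_n$ over the consistent theory ${\sf J}$ together with $\setvarb\cap \setvarc = \emptyset$ ensures that every finite subset of its axioms is satisfiable.

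First I would establish ${\sf sch}(a) \leq_{\sf T} \setvara$. Given a sentence $\varphi$ in the language of ${\sf J}$, apply Theorem~\ref{janicz} to compute effectively a boolean combination $\chi$ of atoms ${\sf A}_{n_1},\dots,{\sf A}_{n_k}$ such that ${\sf J}\vdash \varphi\leftrightarrow \chi$. By the mutual independence of the ${\sf A}_n$ over ${\sf J}$, we have ${\sf sch}(a)\vdash \chi$ iff the propositional formula obtained from $\chi$ by substituting $\top$ for each ${\sf A}_{n_j}$ with $n_j\in \setvarb$ and $\bot$ for each ${\sf A}_{n_j}$ with $n_j\in \setvarc$ is a propositional tautology in the remaining variables. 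Since $\setvarb,\setvarc \leq_{\sf T} \setvara$ by Theorem~\ref{schonesmurf}(i), the membership status of each $n_j$ is decidable from the $\setvara$-oracle, and the resulting tautology check is decidable, so the whole provability test reduces to $\setvara$. For the converse reduction, and more generally for Turing persistence: let $V\supseteq {\sf sch}(a)$ be any consistent RE extension. Define $\setvard_V := \verz{n : V\vdash {\sf A}_n}$. This set is RE, contains $\setvarb$ (its members are ${\sf sch}(a)$-axioms and hence $V$-theorems), and is disjoint from $\setvarc$ (because $V$ is consistent and proves $\neg {\sf A}_n$ for each $n\in \setvarc$). By Theorem~\ref{schonesmurf}(iii), $\setvara \leq_{\sf T} \setvard_V$, and clearly $\setvard_V \leq_{\sf T} V$, so $\setvara \leq_{\sf T} V$; composing with ${\sf sch}(a) \leq_{\sf T} \setvara$ yields ${\sf sch}(a) \leq_{\sf T} V$. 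Specialising to $V = {\sf sch}(a)$ gives $\setvara \equiv_{\sf T} {\sf sch}(a)$, and the general statement is exactly Turing persistence.

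For (3): if $\setvara$ is undecidable then (1) makes ${\sf sch}(a)$ undecidable, and combining with Turing persistence from (2), the remark preceding the theorem yields essential undecidability of ${\sf sch}(a)$. Thus, given any RE non-recursive Turing degree $\mathbf{d}$, picking an RE set $\setvara$ of degree $\mathbf{d}$ produces via ${\sf sch}(a)$ an essentially undecidable theory of degree $\mathbf{d}$. The main obstacle is the first reduction: one must carefully justify that mutual independence of the ${\sf A}_n$ over ${\sf J}$ reduces the provability of an arbitrary boolean combination $\chi$ in ${\sf sch}(a)$ to a purely propositional tautology test in the variables not pinned down by the axioms — essentially, that the subalgebra of the Lindenbaum algebra of ${\sf sch}(a)$ generated by the ${\sf A}_n$ is the free boolean algebra on the ``unpinned'' generators.
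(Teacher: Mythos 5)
Your proof is correct and follows essentially the same route as the paper: you reduce provability in ${\sf sch}(a)$ to a propositional tautology check relative to the $\setvara$-oracle via Janiczak's Theorem~\ref{janicz} and the mutual independence of the ${\sf A}_n$ (your substitution of $\top/\bot$ for the pinned generators is logically the same as the paper's test whether $\psi\to\phi^\ast$ is a tautology), and you obtain Turing persistence by observing that $\{n: V\vdash{\sf A}_n\}$ is an RE separator for $\setvarb,\setvarc$ and invoking Theorem~\ref{schonesmurf}(iii).
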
 

\begin{proof} For any consistent RE extension $V$ of $\mathsf{sch}(a)$, the set $\{i\mid V\vdash \mathsf{A}_i\}$ clearly is an 
RE set separating $\setvarb$ and $\setvarc$ and hence $\setvara$ is Turing reducible to $V$.  It is obvious that 
$\mathsf{sch}(a)$ is consistent. Thus to finish the proof we only need to show that $\mathsf{sch}(a)$ is Turing reducible to $\setvara$.

We describe a $\setvara$-recursive procedure of checking whether given sentence $\varphi$ of the language of {\sf J} is a 
theorem of $\mathsf{sch}(a)$. We can effectively find a boolean combination  $\varphi^\ast$ of ${\sf A}_i$ that is equivalent
over {\sf J} to $\phi$. Let the set of indices of ${\sf A}_i$ occurring in $\phi^\ast$ be $\mathcal I$. 
 Also we can effectively find, using $\setvara$ as an oracle, 
 the conjunction $\psi$ of the  ${\sf A}_j$ with $j \in \mathcal B\cap \mathcal I$ and the
$\neg\, {\sf A}_s$ with $s\in \mathcal C\cap \mathcal I$. Using the mutual independence of the ${\sf A}_n$, it is easy to see that
${\sf sch}(a) \vdash \phi$ iff ${\sf J} \vdash \psi \to \phi^\ast$, and ${\sf J} \vdash \psi \to \phi^\ast$ iff 
$\psi \to \phi^\ast$ is a propositional tautology. Whether $\psi \to \phi^\ast$ is a tautology can be checked with a truth table. 
\end{proof}

\begin{remark}
The result that there is an essentially undecidable theory in every
RE, non-recursive Turing degree is due to Shoenfield. See \cite{scho:degr58}.
This result was improved by Hanf. He proves that there is an essentially undecidable \emph{finitely axiomatised} theory in every
RE, non-recursive Turing degree. See \cite{hanf:mode65}.  By a simple adaptation of the
argument, Hanf's results imply that there is a Turing persistent
finitely axiomatised theory in every RE, non-recursive Turing degree.
\end{remark}

\begin{example}
We provide an example of an essentially undecidable theory that is not Turing persistent.
Consider RE Turing degrees $d$ and $e$ with $0<d<e$. Let $\mathcal X$ be an RE set of degree $d$ and let $\mathcal W$ be an
RE set of degree $e$. We take $\mathcal Y$ and $\mathcal Z$ to be the recursively inseparable RE sets provided by Shoenfield's
result. Let 
\begin{itemize}
\item
$U_0 := {\sf J}+ \verz{{\sf A}_{2i} \mid i \in \mathcal Y} +   \verz{\neg\, {\sf A}_{2j} \mid j \in \mathcal Z}$
\item
$U_1 := U_0+  \verz{{\sf A}_{2k+1} \mid k \in \mathcal W}$.
\item
 $U_2 := U_1+   \verz{{\sf A}_{2k+1} \mid k \in \omega}$.
 \end{itemize}
 Clearly $U_0$ and $U_2$ are Turing persistent and have Turing degree $d$. The theory $U_1$ had degree
 $e$ and, hence, cannot be Turing persistent. Yet, it is clearly
 essentially undecidable. 
 
 So our example shows that
 there are essentially undecidable theories that are not Turing persistent. Moreover, a Turing persistent theory can have a non-Turing persistent 
 RE extension and a non-Turing persistent recursively inseparable theory can have a Turing persistent
 RE extension.
\end{example}

\begin{question}
A theory is \emph{essentially} Turing-persistent if all its consistent RE extensions are Turing-persistent.
Clearly, any Turing-persistent theory in degree $0'$ is essentially Turing-persistent. 
Are there essentially Turing-persistent theories of other degrees?
\end{question}

\subsection{A Second Proof of Theorem~\ref{grotesmurf}}
In this subsection, we prove our main result for
recursively enumerable theories.
We need a basic fact from recursion theory.

Let {\sf Rec} be the set of indices of recursive sets.
We have the following theorem.

\begin{theorem}[Rogers, Mostowski]\label{romo}
{\sf Rec} is complete $\Sigma^0_3$.
\end{theorem}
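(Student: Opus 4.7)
The plan is to establish the two directions of ``complete $\Sigma^0_3$'' separately: first verify $\mathsf{Rec}\in \Sigma^0_3$, then show $\Sigma^0_3$-hardness via a many-one reduction from a known complete index set.

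For the upper bound I would use the standard characterisation ``$W_e$ is recursive iff $\overline{W_e}$ is also RE'', which gives
\[
e\in \mathsf{Rec}\iff \exists e'\,\bigl(W_e\cap W_{e'}=\emptyset\;\wedge\;W_e\cup W_{e'}=\mathbb{N}\bigr).
\]
Unpacked via the Kleene $T$-predicate, disjointness $W_e\cap W_{e'}=\emptyset$ is $\forall n,s_1,s_2\,\lnot(T(e,n,s_1)\wedge T(e',n,s_2))$, hence $\Pi^0_1$, while coverage $W_e\cup W_{e'}=\mathbb{N}$ is $\forall n\,\exists s\,(T(e,n,s)\vee T(e',n,s))$, hence $\Pi^0_2$. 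The matrix is therefore $\Pi^0_2$ and prepending $\exists e'$ gives $\Sigma^0_3$.

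For the hardness direction I would fix a $\Sigma^0_3$-complete set of the form $S=\{x:\exists y\, \phi_{h(x,y)}\text{ is total}\}$, where $h$ is a recursive function supplied by the $\Pi^0_2$-completeness of $\mathsf{Tot}$. The goal is to produce a recursive $f$ satisfying $x\in S\iff W_{f(x)}\in \mathsf{Rec}$. I would use a movable-marker construction: for each $y$ maintain a marker $m_y$ which at stage $s$ advances from $k$ to $k+1$ whenever $\phi_{h(x,y)}(k)\!\downarrow$ in $\leq s$ steps; the key property is that $m_y\to\infty$ iff $\phi_{h(x,y)}$ is total. The set $W_{f(x)}$ is then defined by enumerating a delayed shadow of the halting problem $K$: element $n$ is placed into $W_{f(x)}$ at the first stage $s$ with $n\in K_s$, provided that no $y\le n$ has its marker exceed a suitable threshold at stage $s$. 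If $x\in S$ with least witness $y^*$, then for every $n\ge y^*$ the marker $m_{y^*}$ eventually surpasses the threshold and the enumeration at $n$ is permanently suppressed; so only finitely many elements are added to $W_{f(x)}$, making it recursive. If $x\notin S$, every marker is bounded, no cofinite suppression occurs, and $W_{f(x)}$ is many-one equivalent to $K$ and hence not recursive.

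The main obstacle is calibrating the suppression threshold so that both implications really hold: tightly enough that the unbounded marker of the true witness cofinitely traps $W_{f(x)}$ into a finite set, yet loosely enough that the bounded markers from non-witnesses never cofinitely trap it. The cleanest way to coordinate this is a finite-injury priority argument with one requirement per candidate witness $y$, each acting to suppress further enumeration whenever $y$ currently appears to be the least witness; an equivalent device is to introduce a secondary counter $\ell(s)=\min\{y\le s : m_y^s>n\}$ and enumerate $n$ at stage $s$ only when $\ell(s)$ is undefined. The resulting theorem is the classical Rogers--Mostowski result, and a detailed verification can be found in Rogers' textbook on recursive functions or Soare's monograph on recursively enumerable sets and degrees.
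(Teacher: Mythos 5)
Your upper bound is correct: $e\in\mathsf{Rec}\iff\exists e'\,(W_e\text{ and }W_{e'}\text{ partition }\mathbb{N})$ unpacks to $\exists e'\,\Pi^0_2$ exactly as you compute. The paper does not prove the theorem itself---it cites Rogers (Ch.~14, Thm.~XVI) and Soare (Cor.~4.3.6)---so the whole burden is on your hardness argument, and there is a genuine gap there. The problem is that the race between the growth of the marker $m_{y^*}$ and the enumeration of $K$ cannot be won by any recursive threshold. When $x\in S$ with least witness $y^*$, the marker $m_{y^*}^s\to\infty$, but there is no recursive bound on how slowly it grows in $s$; an element $n$ may enter $K$ at a stage long before $m_{y^*}$ crosses $n$ (or $c(s)$, or whatever threshold you pick), so the suppression arrives too late, infinitely many elements slip in, and the claim that $W_{f(x)}$ is finite fails. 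Conversely, when $x\notin S$ each $m_y$ is bounded, but the bound $\sup_s m_y^s$ is arbitrary and not computable from $x$; a modest threshold can then be spuriously crossed and the enumeration permanently suppressed, leaving $W_{f(x)}$ finite after all. No recursive choice of threshold can be simultaneously low enough to be crossed by $m_{y^*}$ in time (needed when $x\in S$) and high enough never to be crossed by the bounded markers (needed when $x\notin S$). Your auxiliary device $\ell(s)=\min\{y\le s:m_y^s>n\}$ is just the threshold $T(y,n)=n$ in disguise and inherits the same defect.

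The standard construction avoids this by making $W_{f(x)}$ \emph{cofinite}, not finite, when $x\in S$: each time the $\Pi^0_2$ event witnessed by $y^*$ fires, one dumps a whole initial segment up to the current stage into $W_{f(x)}$, an action that can always be performed without having to out-pace $K$; when $x\notin S$ the markers stabilise and the residue codes $K$, so $W_{f(x)}$ is incomputable. One and the same construction then shows $\mathsf{Cof}$, $\mathsf{Rec}$ and $\mathsf{Comp}$ all $\Sigma^0_3$-complete, which is exactly what the cited sources do. I recommend you either switch to this cofinite-target construction or simply quote Soare as the paper does; the finite-target sketch as written does not close.
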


See \cite[Chapter 14, Theorem XVI]{roge:theo67} or \cite[Corollary 4.3.6]{soar:turi16}.
We also have the following theorem.
We have the following theorem
that is given as exercise~4.3.14 of \cite[Page 91]{soar:turi16}.
Let {\sf Rsep} be the set of pairs of indices of recursively separable RE theories. 
\begin{theorem}\label{soar}
{\sf Rsep} is complete $\Sigma^0_3$.
\end{theorem}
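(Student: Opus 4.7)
The plan is to bound the complexity of $\mathsf{Rsep}$ by $\Sigma^0_3$ directly, and then to show $\Sigma^0_3$-hardness by reducing $\mathsf{Rec}$ to $\mathsf{Rsep}$ via Theorem~\ref{schonesmurf}. Since $\mathsf{Rec}$ is complete $\Sigma^0_3$ by Theorem~\ref{romo}, these two bounds together yield the claim.

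For the upper bound I would use the reformulation: $\langle a,b\rangle \in \mathsf{Rsep}$ iff there exist RE indices $e, e'$ whose sets are complementary and such that $W_a \subseteq W_e$ and $W_b \subseteq W_{e'}$. The point is that a set is recursive exactly when both it and its complement are RE, so the existence of a recursive separator can be replaced by the existence of a pair of RE indices with appropriate properties. Each of the inner conditions is at worst $\Pi^0_2$ when written out using Kleene's $T$-predicate: disjointness $W_e \cap W_{e'} = \emptyset$ is $\Pi^0_1$; exhaustiveness $W_e \cup W_{e'} = \omega$ is $\Pi^0_2$; and each containment such as $W_a \subseteq W_e$ unfolds to $\forall x \forall s \exists t\, (T(a,x,s) \to T(e,x,t))$, which is again $\Pi^0_2$. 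The two outer existentials then yield the overall $\Sigma^0_3$ bound.

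For hardness I would exhibit a many-one reduction $\mathsf{Rec} \le_m \mathsf{Rsep}$ using Shoenfield's construction directly. Given an RE index $a$ of a set $\mathcal A$, let $b, c$ be the indices of the sets $\mathcal B, \mathcal C$ produced effectively from $a$ by Theorem~\ref{schonesmurf}. The claim is that $\mathcal A$ is recursive iff $\mathcal B$ and $\mathcal C$ are recursively separable, so that $a \mapsto \langle b, c\rangle$ is the desired reduction. If $\mathcal A$ is recursive, then by clause (i) of that theorem so are $\mathcal B$ and $\mathcal C$; being disjoint by clause (ii), they are trivially separated by the recursive set $\mathcal B$ itself. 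Conversely, any recursive separator $\mathcal D$ is in particular an RE separator, so clause (iii) yields $\mathcal A \le_{\sf T} \mathcal D$, forcing $\mathcal A$ to be recursive as well.

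The main conceptual step, identifying Shoenfield's construction as the right tool, is already packaged into Theorem~\ref{schonesmurf}: once that theorem is in hand, the reduction is forced by its three clauses. The remaining obstacles are routine: tracking quantifier complexity for the upper bound, and verifying that the map $a \mapsto \langle b, c\rangle$ is total recursive, which is immediate from the effectivity clause in Theorem~\ref{schonesmurf}.
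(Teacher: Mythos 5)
Your proof is correct and takes essentially the same approach as the paper: the paper's one-line proof is exactly the observation that Theorem~\ref{schonesmurf} gives a many-one reduction of $\mathsf{Rec}$ to $\mathsf{Rsep}$, which is precisely your hardness argument via the map $a \mapsto \langle b,c\rangle$. The $\Sigma^0_3$ upper bound you carefully work out (encoding a recursive separator by a pair of complementary RE indices) is left implicit in the paper as routine, but your quantifier count is accurate and fills in that part cleanly.
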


This follows immediately from Theorem~\ref{schonesmurf}, since that gives a reduction of
{\sf Rec} to {\sf Rsep}.

We now prove Theorem~\ref{grotesmurf}.

\begin{proof}[Proof of Theorem~\ref{grotesmurf}]
It is sufficient to prove Lemma~\ref{hulpsmurf}.
Suppose that there is an essentially undecidable recursively enumerable theory
$U^\star$ that is the interpretability minimum. 
So, we have:
\begin{eqnarray*}
a \not \in {\sf Rec} & \text{iff} & {\sf sch}(a) \text{ is essentially undecidable}\\
& \text{iff} & {\sf sch}(a) \rhd U^\star
\end{eqnarray*}
Since, interpretability between recursively enumerable theories is
$\Sigma^0_3$, 
it would follow that {\sf Rec} is $\Pi^0_3$.\footnote{In fact,
interpretability between recursively enumerable theories is
complete $\Sigma^0_3$. See \cite{shav:inter97}.} \emph{Quod non}, by 
Theorem~\ref{romo}.
\end{proof}

Here is a variant of the proof.
\begin{proof}[Variant of the Proof of Theorem~\ref{grotesmurf}]
It is sufficient to prove Lemma~\ref{hulpsmurf}.
Suppose that there is an essentially undecidable recursively enumerable theory
$U^\star$ that is the interpretability minimum. Let $\tupel{a,b}$ be a pair of indices
of RE sets and let $\mathcal X$ and $\mathcal Y$ be the sets defined by $a$, respectively $b$.

We define:
\[ {\sf so}(a,b) := {\sf J}+ \verz{{\sf A}_i \mid i\in \mathcal X} +\verz{\neg\,{\sf A}_j \mid j\in \mathcal Y}.\]
We have: 
\begin{eqnarray*}
\tupel{a,b} \not \in {\sf Rsep} & \text{iff} & {\sf so}(a,b) \text{ is essentially undecidable or inconsistent}\\
& \text{iff} & {\sf so}(a,b) \rhd U^\star
\end{eqnarray*}
Since, interpretability between recursively enumerable theories is
$\Sigma^0_3$, 
it would follow that {\sf Rsep} is $\Pi^0_3$.
 \emph{Quod non}, by 
Theorem~\ref{soar}.
\end{proof}

We note that our result is insensitive for the precise notion of interpretability  used. 
It could very well be that it also works for even more general notions like forcing interpretability. 
However, we did not explore this.


\end{document}